\numberwithin{equation}{section}
\numberwithin{figure}{section}
\theoremstyle{plain}
\newtheorem{thm}{\protect\theoremname}[section]
\theoremstyle{remark}
\newtheorem{rem}[thm]{\protect\remarkname}
\theoremstyle{plain}
\newtheorem{prop}[thm]{\protect\propositionname}
\theoremstyle{plain}
\newtheorem{cor}[thm]{\protect\corollaryname}
\newenvironment{proof}[1][\protect\proofname]{\par
	\normalfont\topsep6\p@\@plus6\p@\relax
	\trivlist
	\itemindent\parindent
	\item[\hskip\labelsep\scshape #1]\ignorespaces
}{%
	\endtrivlist\@endpefalse
}
\providecommand{\proofname}{Proof}
\theoremstyle{definition}
\newtheorem{cst}[thm]{Construction}
\providecommand{\corollaryname}{Corollary}
\providecommand{\propositionname}{Proposition}
\providecommand{\remarkname}{Remark}
\providecommand{\theoremname}{Theorem}
\begin{document}
\global\long\def\sf#1{\mathsf{#1}}%

\global\long\def\cal#1{\mathcal{#1}}%

\global\long\def\bb#1{\mathbb{#1}}%

\global\long\def\fr#1{\mathfrak{#1}}%

\global\long\def\o#1{\overline{#1}}%

\global\long\def\pr#1{\left(#1\right)}%

\global\long\def\opn#1{\operatorname{#1}}%

\global\long\def\SS{\sf{sSet}}%

\global\long\def\Cat{\sf{Cat}}%

\global\long\def\SG{\sf{sGrp}}%

\global\long\def\Del{\mathbf{\Delta}}%

\global\long\def\id{\operatorname{id}}%

\global\long\def\op{\mathrm{op}}%

\global\long\def\ot{\leftarrow}%

\global\long\def\ho{\operatorname{ho}}%

\global\long\def\sk{\operatorname{sk}}%

\global\long\def\adj{\stackrel[\longleftarrow]{\longrightarrow}{\bot}}%

\global\long\def\W{\overline{W}}%

\title{Classifying Space via Homotopy Coherent Nerve}
\author{Kensuke Arakawa}
\email{arakawa.kensuke.22c@st.kyoto-u.ac.jp}
\address{Department of Mathematics, 
Kyoto University, 
Kyoto, 
606-8502, 
Japan}
\subjclass{55R35, 18N60}
\keywords{classifying space, homotopy coherent nerve.}
\begin{abstract}
We prove that the classifying space of a simplicial group is modeled
by its homotopy coherent nerve. We will also show that the claim remains
valid for simplicial groupoids.
\end{abstract}
\maketitle

\section{Introduction}

Classically, the classifying space of a topological group $G$ is
defined to be the base space of a principal $G$-bundle with weakly
contractible total space. There is a parallel construction for simplicial
groups: If $G$ is a simplicial group, then a \textbf{principal $G$-fibration}
is a map $p\colon E\to B$ of simplicial sets, such that $E$ is a right
$G$-simplicial set, each $E_{n}$ is $G_{n}$-free, and $E/G\cong B$.
The \textbf{classifying space} $BG$ is defined to be the base of
a principal $G$-fibration whose total is a contractible Kan complex. 

Just as the topological classifying space can be constructed by regarding
a group as a category with one object, taking its nerve, and then
applying the realization functor, it is expected that a similar result holds
for simplicial groups. That is, given a simplicial group $G$, we
might want to guess that the homotopy coherent nerve $NG$, where
$G$ is regarded as a simplicial category with one object, has the
homotopy type of $BG$. The purpose of this note is to prove this
result and a further generalization to the case where $G$ is a simplicial
groupoid.

We note that the statement of the main result of this note (Theorem
\ref{thm:main}) appeared in \cite{Hinich07} previously; however,
Hinich's argument does not seem to hold: He computes the homotopy
groups of $NG$ and $BG$, and constructs a comparison map $BG\to NG$,
but does not explain why the map induces isomorphisms in the homotopy
groups. Fortunately, this gap was recently filled by Minichiello,
Rivera, and Zeinalian in \cite{Rivera_22}. We will
follow an alternative path: Instead of comparing $BG$ and $NG$,
we will compare the corresponding left adjoints. This key insight,
which was communicated to us by Dmitri Pavlov in \cite{426162}, leads
to a more direct and concise argument.

\subsection*{Notation and Terminology}

By a \textbf{simplicial category}, we mean a simplicially enriched
category. If $\cal C$ is a simplicial category and $x,y$ are its
objects, we will write $\cal C\pr{x,y}$ for the simplicial set of
maps from $x$ to $y$ and call its $n$-simplex an \textbf{$n$-arrow}
from $x$ to $y$. The ordinary category consisting of the $n$-arrows
is denoted by $\cal C_{n}$. A \textbf{simplicial groupoid} is a simplicial
category whose $n$-arrows are all invertible, for any $n\geq0$.
A \textbf{simplicial group} $G$, i.e., a simplicial object in the
category of groups, will be identified with a simplicial groupoid
with a single object whose endo-simplicial set is $G$.

A simplicial set is said to be \textbf{reduced} if it has only one
vertex.

We will write $\Cat_{\Delta}$, $\sf{Grpd}_{\Delta}$, $\SG$, $\SS$,
$\SS_{0}$, $\Del$ for the categories of small simplicial categories,
its full subcategory of simplicial groupoids, simplicial groups, simplicial
sets, reduced simplicial sets, and the simplex category (the category
of nonempty finite ordinals and poset map) respectively. Unless stated
otherwise, $\Cat_{\Delta}$ and $\SS$ carry the Bergner and Kan-Quillen
model structures, respectively.

The \textbf{homotopy coherent nerve} $N\colon\Cat_{\Delta}\to\SS_{\mathrm{Joyal}}$,
originally due to Cordier \cite{Cordier_hcnerve}, is a right Quillen
equivalence from the Bergner model structure to the Joyal model structure
arising from a simplicial object in $\Cat_{\Delta}$. There are two
conventions for the choice of a simplicial object. The first one uses
the cosimplicial simplicial category $\fr C[\Delta^{\bullet}]$. The
simplicial category $\fr C[\Delta^{n}]$ has as its objects the integers
$0,\dots,n$, and its mapping simplicial sets are given by
\[
\fr C[\Delta^{n}]\pr{i,j}=N\pr{P_{i,j}},
\]
where $P_{i,j}$ is the poset of subset of subsets $I\subset[n]$
with minimal element $i$ and maximal element $j$, ordered by inclusion.
The other one uses the cosimplicial simplicial category $\widetilde{\fr C}[\Delta^{\bullet}]$,
where $\widetilde{\fr C}[\Delta^{n}]$ is obtained from $\fr C[\Delta^{n}]$
by taking the opposites of the mapping simplicial sets. We will opt
for the latter convention because it makes our exposition more concise\footnote{The choice of the convention is insignificant. Indeed, since the geometric
realization of a simplicial set is naturally homeomorphic to that
of the opposite, if $\mathcal{C}$ is a topological category (i.e.,
a category enriched over the category of compactly generated weak
Hausdorff spaces), then we have a natural bijection
\[
\Cat_{\Delta}\pr{\fr C[\Delta^{\bullet}],\opn{Sing}\cal C}\cong\Cat_{\Delta}\pr{\widetilde{\fr C}[\Delta^{\bullet}],\opn{Sing}\cal C}.
\]
Combining this with the fact that the functor $\opn{Sing}\left|-\right|\colon\Cat_{\Delta}\to\Cat_{\Delta}$
admits a natural weak equivalence from the identity, we find that
the two homotopy coherent nerve functors arising from different conventions
can be joined by a zig-zag of natural transformations whose components
at fibrant simplicial categories are weak categorical equivalences.}. For a comprehensive account of the homotopy coherent nerve functor
and the model structures of Bergner and Joyal, we refer the reader
to \cite[\S 1.1.5, \S 2.2.5, \S A.3.2]{LurieHTT}. But beware that
in \cite{LurieHTT}, Lurie adopts the first convention for the homotopy
coherent nerve functor.

The term ``\textbf{$\infty$-category}'' is a synonym for ``quasi-category''
in the sense of Joyal \cite{Joyal_qcat_Kan}. The term ``\textbf{$\infty$-groupoid}''
will be used as a synonym for ``Kan complex.''

\subsection*{Acknowledgment}

As mentioned above, the underlying idea of this note is entirely due
to \cite{Hinich07} and Dmitri Pavlov's answer given in \cite{426162}.
The author especially thanks Dmitri Pavlov for his hospitality and
patience, and for commenting on an earlier draft of this note. The
author would also like to express gratitude to his advisor Daisuke
Kishimoto, who is always willing to go out of his way to help and who suggested making this note public in the first place. Finally,
the author is grateful to Daisuke Kishimoto and Mitsunobu Tsutaya
for reading earlier drafts, spotting errors, and making helpful suggestions. 

\section{Review of Simplicial Classifying Spaces}

In this section, we review some basic results and constructions on
simplicial groups which we use freely in the next section. 

One of the guiding principles in higher category theory is Grothendieck's
homotopy hypothesis, which states that ``spaces'' and ``higher
groupoids'' should be the same. The $\W$-construction, which we
now introduce, provides an incarnation of this principle.

\begin{cst}Let $\cal G$ be a simplicial groupoid. We define a simplicial
set $\W\cal G$ as follows: An $n$-simplex of $\W\cal G$ is a sequence
\[
x_{n}\xleftarrow{g_{0}}x_{n-1}\xleftarrow{g_{1}}\cdots\xleftarrow{g_{n-2}}x_{1}\xleftarrow{g_{n-1}}x_{0},
\]
where $x_{0},\dots,x_{n}$ are objects of $\cal G$ and $g_{n-i}\colon x_{i-1}\to x_{i}$
is an $\pr{n-i}$-arrow in $\cal G$. For $n\ge1$, the face map $d_{i}\colon \pr{\o W\cal G}_{n}\to\pr{\W\cal G}_{n-1}$
is given by
\[
d_{i}\pr{g_{0},\dots,g_{n-1}}=\begin{cases}
\pr{g_{0},\dots,g_{n-2}} & \text{if }i=0,\\
(g_{0},\dots,g_{n-i-2},\\
\;\;\;g_{n-i-1}\circ d_{0}g_{n-i},\dots,d_{i-2}g_{n-2},d_{i-1}g_{n-1}) & \text{if }0<i<n,\\
\pr{d_{1}g_{1},\dots,d_{n-1}g_{n-1}} & \text{if }i=n,
\end{cases}
\]
while for $n\geq0$, the degeneracy map $s_{i}\colon \pr{\W\cal G}_{n}\to\pr{\W\cal G}_{n+1}$
is given by
\[
s_{i}\pr{g_{0},\dots,g_{n-1}}=\begin{cases}
\pr{g_{0},\dots,g_{n-1},\id} & \text{if }i=0,\\
\pr{g_{0},\dots,g_{n-i-1},\id,s_{0}g_{n-i},\dots,s_{i-1}g_{n-1}} & \text{if }0<i<n,\\
\pr{\id,s_{0}g_{0},\cdots,s_{n-1}g_{n-1}} & \text{if }i=n.
\end{cases}
\]

\end{cst}
\begin{rem}
Our definition of the functor $\W\colon \sf{Grpd}_{\Delta}\to\SS$ is the
opposite of that defined in \cite[Chapter V, \S7]{GoerssJardine},
in the sense that they instead consider the functor $\sf{Grpd}_{\Delta}\xrightarrow{\pr -^{\op}}\sf{Grpd}_{\Delta}\xrightarrow{\o W}\SS$.
The discrepancy arose from the fact that we consider \textit{right}
actions of simplicial groups, whereas in \cite{GoerssJardine} simplicial
groups act from the \textit{left}. 
\end{rem}
\begin{prop}
[{\cite[Theorems 7.6, 7.8]{GoerssJardine}}]The following specifications
determine a model structure on $\sf{sGrpd}$:
\begin{itemize}
\item A map $f\colon \cal G\to\cal H$ is a weak equivalence if and only if the
following conditions are satisfied:
\begin{itemize}
\item The functor $f\colon \cal G_{0}\to\cal H_{0}$ induces a bijection between
the sets of components of $\cal G_{0}$ and $\cal H_{0}$.
\item For each object $x\in\cal G$, the map $\cal G\pr{x,x}\to\cal H\pr{fx,fx}$
is a weak homotopy equivalence.
\end{itemize}
\item A map $f\colon \cal G\to\cal H$ is a fibration if and only if the following
conditions are satisfied:
\begin{itemize}
\item Given a morphism $v\colon y\to y'$ in $\cal H_{0}$ and an object $x\in\cal G$
such that $fx=y$, there is a morphism $u\colon x\to x'$ in $\cal G_{0}$
such that $fu=v$.
\item For each object $x\in\cal G$, the map $\cal G\pr{x,x}\to\cal H\pr{fx,fx}$
is a Kan fibration.
\end{itemize}
\end{itemize}
Moreover, the $\W$-construction defines a right Quillen equivalence
$\W\colon \sf{Grpd}_{\Delta}\to\SS$.
\end{prop}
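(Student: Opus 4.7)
My plan is to build the claimed model structure and verify the Quillen equivalence by exhibiting an explicit left adjoint $G \colon \SS \to \sf{Grpd}_{\Delta}$ to $\W$ and then comparing with the classical Dwyer--Kan picture.

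First I would construct $G$. Given a simplicial set $X$, let $GX$ be the simplicial groupoid whose objects in each simplicial degree are the vertices of $X$, with morphisms freely generated by the edges of $X$ subject to the relations imposed by the $2$-simplices (a loop-groupoid-type construction, adapted to our right-action convention and to the combinatorics of $\W$). The adjunction $G \dashv \W$ can then be checked by matching generators and relations of $GX$ against the combinatorial description of $\W\cal G$ given above: specifying a simplicial map $X \to \W\cal G$ is precisely the data of vertex labels, edge labels, and cocycle-type compatibilities in higher dimensions required to define a functor $GX \to \cal G$.

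Next I would establish the model structure on $\sf{Grpd}_{\Delta}$. A clean route is transfer: the inclusion $\sf{Grpd}_{\Delta} \hookrightarrow \Cat_{\Delta}$ has a left adjoint (groupoidification, formally inverting all arrows level-wise), and I would transfer the Bergner model structure along this adjunction. The only nontrivial hypothesis is acyclicity of pushouts of generating trivial cofibrations, which holds because such pushouts are computed essentially as in $\Cat_{\Delta}$ along inclusions of discrete subcategories. Alternatively one can verify the model-category axioms directly, using $\{G(\partial\Delta^{n} \hookrightarrow \Delta^{n})\}$ and $\{G(\Lambda^{n}_{k} \hookrightarrow \Delta^{n})\}$ as generating (trivial) cofibrations; the characterizations of fibrations and weak equivalences in the statement then follow by unwinding the lifting conditions against these generators.

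With the model structure in place, $G \dashv \W$ is a Quillen adjunction essentially by construction. To upgrade it to a Quillen equivalence one must check that the derived unit $X \to \W GX$ is a weak equivalence for every Kan complex $X$ and that the derived counit $G\W\cal G \to \cal G$ is a weak equivalence for every fibrant $\cal G$. The counit condition reduces componentwise to the classical simplicial-group case, where it is a standard calculation. The main obstacle will be the unit: proving $X \to \W GX$ is a weak equivalence is the substantive content, essentially Kan's loop group theorem in groupoid form. The argument I would use builds the universal principal $GX$-bundle $WGX \to \W GX$ (with $WGX$ defined by the obvious décalage-type variant of the formulas above), shows that $WGX$ is a contractible Kan complex and that the projection is a principal fibration, and then compares long exact sequences of homotopy groups with those of the geometric path-loop fibration on $|X|$ to conclude.
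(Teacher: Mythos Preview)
The paper does not prove this proposition; it is quoted as background with a citation to \cite{GoerssJardine}, so there is no argument in the paper to compare against.

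Your sketch has the right overall architecture (construct the left adjoint, establish the model structure, then verify the Quillen equivalence via the unit and counit), and this is broadly how the cited reference proceeds. However, two points are genuine gaps rather than harmless elisions. First, your description of the left adjoint $G$ is incorrect: taking the free groupoid on edges modulo relations from $2$-simplices produces only the fundamental groupoid of $X$ as a \emph{constant} simplicial groupoid, which is not left adjoint to $\W$. The actual Dwyer--Kan loop groupoid has, in simplicial degree $n$, the free groupoid on the graph with vertex set $X_0$ and edge set the nondegenerate part of $X_{n+1}$ (source and target given by suitable vertex maps), with a nontrivial simplicial structure; without this the adjunction you want does not exist. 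Second, in your ``direct'' route you assert that the stated description of weak equivalences follows by ``unwinding the lifting conditions against these generators,'' but weak equivalences in a cofibrantly generated model structure are not detected by lifting against the generating sets. An independent identification is required---e.g.\ showing that a map is a weak equivalence in $\sf{Grpd}_{\Delta}$ if and only if $\W$ carries it to a weak homotopy equivalence, and then translating that back into the hom-wise and $\pi_0$ conditions---and this, together with the unit calculation you flag, is where the substantive work in \cite{GoerssJardine} actually lies. Your transfer-from-Bergner alternative is plausible in principle, but the acyclicity condition you call routine is again the real content; the standard references do not take this route.
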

If simplicial sets model ``spaces'' and hence ``higher groupoids,''
then reduced simplicial sets should model ``higher groups.'' It
turns out that the $\W$-construction also substantiates this intuition.
\begin{prop}
[{\cite[Chapter II, \S 3, Theorem 2]{Quillen_HA}}]The category $\SG$
admits a model structure whose fibrations and weak equivalences are
created by the forgetful functor $\SG\to\SS$.
\end{prop}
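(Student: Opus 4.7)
The plan is to obtain the model structure by \emph{transferring} the Kan--Quillen model structure on $\SS$ along the free-forgetful adjunction $F \colon \SS \rightleftarrows \SG : U$, where $F$ sends a simplicial set $X$ to the degreewise free group $n \mapsto F(X_n)$. With this setup, fibrations and weak equivalences in $\SG$ are \textit{defined} to be those maps whose image under $U$ has the corresponding property in $\SS$; the task is to produce a compatible class of cofibrations satisfying the model axioms. I would invoke Kan's recognition theorem with candidate generating cofibrations $FI$ and candidate generating trivial cofibrations $FJ$, where $I$ and $J$ are the standard sets of boundary and horn inclusions in $\SS$.

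The formal prerequisites are quickly dispatched: since groups form a Lawvere theory, $\SG$ is complete, cocomplete, and locally presentable, so the small object argument for $FI$ and $FJ$ applies. The forgetful functor $U$ preserves filtered colimits because the free-group monad does. These facts supply the two factorization axioms and one half of the lifting axiom formally, reducing everything to the \emph{acyclicity condition}: every transfinite composition of pushouts of maps in $FJ$ must be carried by $U$ to a weak equivalence in $\SS$.

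This acyclicity check is the main obstacle, and I would treat it via the path-object variant of Quillen's transfer theorem. The crucial input is the classical theorem of Moore: the underlying simplicial set of every simplicial group is a Kan complex, the horn-filler being obtained by using the group operation to reduce the problem to filling a horn based at the identity element. It follows that every object of $\SG$ is fibrant in the candidate model structure, and moreover that the cotensor $G^{\Delta^1}$ (which exists because $\SG$ is cotensored over $\SS$) provides a functorial good path object: the map $G \to G^{\Delta^1}$ is a simplicial homotopy equivalence and $G^{\Delta^1} \to G \times G$ is a Kan fibration on underlying simplicial sets. These ingredients are exactly what the path-object argument requires; running it closes the transfer and yields the desired model structure, with the cofibrations being precisely the retracts of relative $FI$-cell complexes.
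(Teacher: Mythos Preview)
The paper gives no proof of this proposition; it is quoted verbatim as a result of Quillen and used as a black box. Your transfer argument is correct and is essentially the modern packaging of Quillen's original proof: the free--forgetful adjunction, the small object argument, and the path-object criterion with Moore's theorem supplying fibrancy and $G^{\Delta^1}$ the path object. One small point worth making explicit is that the forgetful functor $U$ commutes with the cotensor, i.e.\ $U(G^{\Delta^1}) \cong U(G)^{\Delta^1}$, which is what lets you read off the required weak equivalence and fibration on underlying simplicial sets; this holds because $U$ is a right adjoint and the cotensor is a limit construction.
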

\begin{prop}
[{\cite[Proposition 6.2]{GoerssJardine}}]The category $\SS_{0}$
admits a model structure whose cofibrations and weak equivalences
are created by the forgetful functor $\SS_{0}\to\SS$.
\end{prop}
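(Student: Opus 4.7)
My plan is to verify the model category axioms for $\SS_{0}$ directly, by transferring the Kan--Quillen structure from $\SS$ via the reflective adjunction with left adjoint $L\colon\SS\to\SS_{0}$ and right adjoint the forgetful functor $i\colon\SS_{0}\to\SS$, where $L$ sends a simplicial set to the quotient identifying all of its $0$-simplices with a single vertex. Since $i$ preserves limits (a limit of reduced simplicial sets is reduced, because the relevant computation in degree $0$ is a limit of singletons) and $\SS$ is cocomplete, $\SS_{0}$ is bicomplete, with colimits obtained by applying $L$ to the corresponding colimits in $\SS$. Define cofibrations and weak equivalences via $i$ as prescribed, and fibrations by the right lifting property against trivial cofibrations; then two-out-of-three, the retract axiom, and one half of the lifting axiom are all immediate from the corresponding facts in $\SS$ or from the definition.

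For the factorization axioms I apply the small object argument with generating sets chosen inside $\SS_{0}$. Natural candidates are $\{L(\partial\Delta^{n}\hookrightarrow\Delta^{n}):n\geq 1\}$ for generating cofibrations and $\{L(\Lambda^{n}_{k}\hookrightarrow\Delta^{n}):n\geq 2,\,0\leq k\leq n\}$ for generating trivial cofibrations. The case $n=1$ must be omitted among horns because $L(\Lambda^{1}_{k}\hookrightarrow\Delta^{1})$ is the inclusion $\Delta^{0}\hookrightarrow S^{1}$, which is not a weak equivalence. This omission is harmless, since the corresponding lifting property is automatic for every reduced simplicial set: its unique vertex already fills any such horn via a degenerate edge. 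One should then observe that pushouts of reduced simplicial sets in $\SS$ remain reduced (again because pushouts of singletons are singletons), so these generators behave well under the small object argument inside $\SS_{0}$.

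The main obstacle is verifying that each reduced horn $L(\Lambda^{n}_{k}\hookrightarrow\Delta^{n})$ with $n\geq 2$ is really a trivial cofibration in $\SS$. That it is a monomorphism is clear, since $L$ is the identity on simplices of positive dimension. For the weak-equivalence part one computes that $L(\Lambda^{n}_{k})$ and $L(\Delta^{n})$ have the same homotopy type --- roughly, a wedge of circles whose number depends on $n$ and $k$, with the new $n$-cell in $L(\Delta^{n})$ killing only homotopy classes that are already nullhomotopic in $L(\Lambda^{n}_{k})$. This can be carried out by a van Kampen computation for $\pi_{1}$ together with a cellular argument for the higher homotopy groups. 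Once this verification is in place, the small object argument produces both factorizations, and the second half of the lifting axiom follows by showing that every trivial fibration in $\SS_{0}$ is actually a trivial Kan fibration in $\SS$, hence lifts against all monomorphisms and in particular against cofibrations in $\SS_{0}$.
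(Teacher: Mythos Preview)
The paper does not supply its own proof of this proposition; it merely cites \cite[Proposition~6.2]{GoerssJardine}. Your outline follows the same strategy as Goerss--Jardine, but it leaves out the step that carries most of the weight.

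You define fibrations by the right lifting property against all trivial cofibrations and propose $J=\{L(\Lambda^n_k\hookrightarrow\Delta^n):n\ge2\}$ as a generating set. The small object argument on $J$ factors any map as a $J$-cell map followed by a $J$-injective map, and the first factor is indeed a trivial cofibration. What you never verify is that the second factor is a fibration in your sense, i.e.\ that every trivial cofibration of $\SS_0$ lies in $J\text{-cof}$. Unwinding the usual retract argument, this amounts to showing that any weak equivalence between reduced simplicial sets with the right lifting property against horns of dimension $\ge2$ is already a trivial Kan fibration. This is exactly the nontrivial lemma in Goerss--Jardine's proof (their Lemma~V.6.6), and your sketch does not address it; your final sentence handles only the other lifting axiom.

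Your justification for omitting the $n=1$ horns is also mistaken and does not help here. The degenerate-edge argument shows only that the terminal map $X\to\Delta^0$ lifts against $\Lambda^1_k\hookrightarrow\Delta^1$; for a general map $p\colon X\to Y$ of reduced simplicial sets the lifting problem asks for an edge of $X$ lying over a \emph{prescribed} (possibly nondegenerate) edge of $Y$, and the degenerate edge does not suffice. Thus $J$-injective maps need not be Kan fibrations, and the missing $n=1$ case is precisely where the difficulty above is concentrated.

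Finally, what you call the ``main obstacle''---that $L(\Lambda^n_k)\to L(\Delta^n)$ is a weak equivalence for $n\ge2$---has a one-line proof: since $\sk_0\Lambda^n_k=\sk_0\Delta^n$, both $L(\Lambda^n_k)$ and $L(\Delta^n)$ are pushouts of the cofibration $\sk_0\Delta^n\hookrightarrow(-)$ along the collapse $\sk_0\Delta^n\to\Delta^0$, and the gluing lemma transports the weak equivalence $\Lambda^n_k\hookrightarrow\Delta^n$ to the pushouts. No van Kampen computation is needed.
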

\begin{thm}
[{\cite[Proposition 6.3]{GoerssJardine}}]The $\overline{W}$-construction
defines a right Quillen equivalence functor $\o W\colon \SG\to\SS_{0}$.
\end{thm}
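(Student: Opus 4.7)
The plan is to obtain a left adjoint of $\overline{W}\colon \SG\to \SS_0$ by restricting the left adjoint of the big functor $\overline{W}\colon \sf{Grpd}_\Delta\to \SS$ provided by the previous proposition, to verify the Quillen adjunction by comparison with the big one, and to deduce the Quillen equivalence from the Quillen equivalence $\sf{Grpd}_\Delta\simeq \SS$. First I would observe that $\overline{W}$ does send $\SG$ into $\SS_0$, because the vertex set of $\overline{W}\cal G$ is in bijection with the object set of $\cal G$. Writing $\cal L\colon \SS\to \sf{Grpd}_\Delta$ for the left adjoint of the big $\overline{W}$, the functor $\cal L$ sends a reduced simplicial set to a one-object simplicial groupoid, so it restricts to a functor $L\colon \SS_0\to \SG$, which is left adjoint to $\overline{W}|_{\SG}$ because the inclusions $\SS_0\hookrightarrow \SS$ and $\SG\hookrightarrow \sf{Grpd}_\Delta$ are fully faithful.

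Next I would verify that $(L,\overline{W})$ is a Quillen adjunction. Weak equivalences in $\SG$ coincide with weak equivalences in $\sf{Grpd}_\Delta$ between simplicial groups (the $\pi_0$-bijection condition is vacuous for one-object groupoids), and weak equivalences and cofibrations in $\SS_0$ are inherited from $\SS$. The delicate point---and the main obstacle I anticipate---is that fibrations in $\SG$ are strictly more permissive than those in $\sf{Grpd}_\Delta$, which require an additional surjectivity-on-$0$-simplices condition; however, a standard lifting argument using the Kan fibration property together with the bijectivity on $\pi_0$ shows that the two classes of \emph{acyclic} fibrations do coincide between simplicial groups. Consequently, for a cofibration $i$ in $\SS_0$ the image $\cal L i$ is a cofibration in $\sf{Grpd}_\Delta$ (since $\cal L$ is left Quillen), so it has the left lifting property against every acyclic fibration in $\sf{Grpd}_\Delta$, and in particular against every acyclic fibration in $\SG$, making $L i$ a cofibration in $\SG$. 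The same argument handles acyclic cofibrations.

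Finally, to establish the Quillen equivalence, I would observe that every object of $\SS_0$ is cofibrant ($\Delta^0$ is initial and cofibrations are monomorphisms) and, by Moore's theorem, every simplicial group is fibrant in $\SG$, so the derived unit and counit of $(L,\overline{W})$ coincide with the ordinary unit and counit. Under the identification $L=\cal L|_{\SS_0}$, these are precisely the restrictions of the unit and counit of the Quillen equivalence $\cal L\dashv \overline{W}\colon \SS\leftrightarrows \sf{Grpd}_\Delta$, and they are weak equivalences in $\SS_0$ and $\SG$ because weak equivalences in these categories are inherited from the ambient ones.
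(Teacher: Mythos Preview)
The paper does not give its own proof of this statement; it is simply cited from Goerss--Jardine, so there is no in-paper argument to compare against.

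Your argument is essentially correct and is a pleasant way to deduce the reduced statement from the groupoid-level Quillen equivalence already quoted in the paper. Two remarks. First, the sentence ``the same argument handles acyclic cofibrations'' is imprecise as written: the lifting argument you used for cofibrations does \emph{not} transpose verbatim, since fibrations in $\SG$ form a strictly larger class than fibrations in $\sf{Grpd}_\Delta$ between one-object groupoids (as you yourself pointed out). What does work is to observe that $\cal L i$ is a cofibration \emph{and} a weak equivalence in $\sf{Grpd}_\Delta$, and then use that both classes agree with their $\SG$ counterparts between simplicial groups; you have all the ingredients for this, so it is only a matter of phrasing. Second, your assertion that $\cal L$ carries reduced simplicial sets to one-object simplicial groupoids deserves a word of justification; the cleanest is to note that $\mathrm{ob}\circ\cal L$ is left adjoint to $\overline{W}\circ\mathrm{codisc}$, which is the $0$-coskeleton functor, so $\mathrm{ob}(\cal L X)\cong X_0$.

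One caveat on provenance rather than correctness: in Goerss--Jardine the reduced equivalence (their Proposition~6.3) precedes the groupoid-level one (their Theorems~7.6, 7.8), so your deduction reverses the logical order of the original source. Within the paper at hand this is harmless, since both results are taken as black boxes, but it would be circular if one were trying to reprove the groupoid-level statement afterwards.
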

As a corollary of the above theorem, we find that for any simplicial
group $G$, the simplicial set $\W G$ is a Kan complex. As explained
in \cite[Chapter V, \S4]{GoerssJardine}, the simplicial set $\W G$
is the base of a certain principal $G$-fibration $WG\to\W G$, and
the simplicial set $WG$ is contractible \cite[Chapter V, Lemma 4.6]{GoerssJardine}.
Combining this with the fact that every principal $G$-fibration is
a Kan fibration \cite[Corollary 2.7]{GoerssJardine}, we obtain:
\begin{cor}
For any simplicial group $G$, the simplicial set $\W G$ is a $BG$.
\end{cor}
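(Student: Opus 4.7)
The plan is to assemble the corollary directly from the facts cited in the surrounding discussion. Per the introduction, a $BG$ is by definition the base of a principal $G$-fibration whose total space is a contractible Kan complex. Thus I need to exhibit such a fibration over $\W G$.

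First, the preceding theorem asserts that $\W\colon\SG\to\SS_{0}$ is a right Quillen equivalence, and every simplicial group is fibrant in $\SG$ (its underlying simplicial set is a Kan complex); hence $\W G$ is fibrant in $\SS_{0}$, and fibrant objects in the reduced model structure are in particular Kan complexes. Second, the cited construction in \cite[Chapter V, \S4]{GoerssJardine} produces a principal $G$-fibration $WG\to\W G$, while \cite[Chapter V, Lemma 4.6]{GoerssJardine} ensures that the total space $WG$ is contractible. Third, by \cite[Corollary 2.7]{GoerssJardine} every principal $G$-fibration is a Kan fibration; since the base $\W G$ is a Kan complex, the total space $WG$ is automatically a Kan complex as well (given a horn $\Lambda_{k}^{n}\to WG$, first fill the composite horn in $\W G$, then lift the filler to $WG$ via the Kan fibration property).

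Combining these three ingredients, $WG\to\W G$ is a principal $G$-fibration whose total space is a contractible Kan complex, so $\W G$ satisfies the definition of $BG$ recalled in the introduction. There is essentially no obstacle here: the corollary is a bookkeeping consolidation of previously cited results. The only mild subtlety, worth highlighting explicitly, is the observation that the total space of a Kan fibration lying over a Kan complex is itself a Kan complex, which is what upgrades the contractibility statement about $WG$ into the required ``contractible Kan complex'' hypothesis.
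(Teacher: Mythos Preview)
Your proposal is correct and follows essentially the same route as the paper: the paper's argument is the paragraph immediately preceding the corollary, which invokes exactly the same three ingredients (the principal $G$-fibration $WG\to\W G$ from \cite[Chapter V, \S4]{GoerssJardine}, contractibility of $WG$ from \cite[Chapter V, Lemma 4.6]{GoerssJardine}, and the fact that principal $G$-fibrations are Kan fibrations from \cite[Corollary 2.7]{GoerssJardine}). Your one addition---making explicit why $WG$ is itself a Kan complex, via the Kan fibration over the Kan complex $\W G$---is a detail the paper leaves implicit, but otherwise the arguments coincide.
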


\section{Main Result}

The goal of this section is to prove the main result of this note:
If $\cal G$ is a simplicial groupoid, there is a natural homotopy
equivalence
\[
\overline{W}\cal G\xrightarrow{\simeq}N\cal G.
\]

\begin{rem}
Note that $N\cal G$ is a Kan complex. Indeed, since simplicial groups
are Kan complexes, $\cal G$ is a fibrant simplicial category, and
so its homotopy coherent nerve $N\cal G$ is an $\infty$-category.
Moreover, its homotopy category $\ho\pr{N\cal G}\cong\pi_{0}\pr{\cal G}$
is a groupoid. Thus $N\cal G$ is an $\infty$-groupoid and hence
is a Kan complex.
\end{rem}
Our natural transformation $\overline{W}\to N$ is constructed from
a morphism of cosimplicial objects in the category $\Cat_{\Delta}$. We thus construct a cosimplicial object
corresponding to $\W$:

\begin{cst}[{\cite{Hinich07}}]

For each $n\geq0$, define $\Delta_{\W}^{n}$ to be the simplicial
category freely generated by an $\pr{n-i}$-arrow $g_{n,i}\colon i-1\to i$,
for each $1\leq i\leq n$. In other words, the objects of $\Delta_{\W}^{n}$
are the integers $0,\dots,n$, and the hom-simplicial sets are given
by
\begin{align*}
\Delta_{\W}^{n}\pr{i,j} & =\begin{cases}
\prod_{n-j\leq s<n-i}\Delta^{s} & \text{if }i\leq j,\\
\emptyset & \text{if }i>j.
\end{cases}
\end{align*}
We interpret the empty product as $\Delta^{0}$. The composition
map $\Delta_{\W}^{n}\pr{j,k}\times\Delta_{\W}^{n}\pr{i,j}\to\Delta_{\W}^{n}\pr{i,k}$
is the identity map.

If $\cal C$ is a simplicial category, a simplicial functor $f\colon\Delta_{\W}^{n}\to\cal C$
can be identified with a sequence $x_{0}\xrightarrow{f_{1}}x_{1}\xrightarrow{f_{2}}\cdots\xrightarrow{f_{n}}x_{n}$,
where $f_{i}$ is the image of the morphism $g_{n,i}$ under $f$.
We make $\{\Delta_{\W}^{n}\}_{n\geq0}$ into a cosimplicial object
in $\Cat_{\Delta}$ as follows: For $n\geq1$ and $0\leq i\leq n$,
the map $\partial_{i}\colon\Delta_{\W}^{n-1}\to\Delta_{\W}^{n}$ is given
by
\[
\partial_{i}\pr{g_{n-1,j}}=\begin{cases}
d_{i-j}g_{n,j} & \text{if }j<i\text{ or }i=n,\\
g_{n,i-1}\circ d_{0}g_{n,i} & \text{if }j=i<n,\\
g_{n,j+1} & \text{if }j>i.
\end{cases}
\]
For $n\geq0$ and $0\leq i\leq n$, the map $\sigma_{i}\colon\Delta_{\W}^{n+1}\to\Delta_{\o W}^{n}$
is given by
\[
\sigma_{i}\pr{g_{n+1,j}}=\begin{cases}
s_{i-j}g_{n,j} & \text{if }j\leq i,\\
\id_{i} & \text{if }j=i+1,\\
g_{n,j-1} & \text{if }j>i+1.
\end{cases}
\]
With this definition, the functor $\W\colon\sf{Grpd}_{\Delta}\to\SS$ is
the restriction of the functor $\Cat_{\Delta}\pr{\Delta_{\W}^{\bullet},-}\colon\Cat_{\Delta}\to\SS$.

\end{cst}
\begin{prop}
\label{prop:CtoG}There is a unique morphism
\[
\widetilde{\fr C}[\Delta^{\bullet}]\to\Delta_{\W}^{\bullet}
\]
of cosimplicial objects in $\Cat_{\Delta}$ such that each simplicial
functor $\widetilde{\fr C}[\Delta^{n}]\to\Delta_{\W}^{n}$ is the
identity on objects.
\end{prop}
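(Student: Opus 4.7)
Plan:

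The plan is to use cosimplicial naturality and composition in the simplicial categories to force the value of $\tau$ on every simplex of $\widetilde{\fr C}[\Delta^\bullet]$, then to verify that the resulting formula defines a morphism of cosimplicial objects.

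I would first observe that a morphism $\tau$ identity on objects amounts, at each level $n$, to a family of simplicial-set maps $\tau_n^{(i,j)} \colon \widetilde{\fr C}[\Delta^n](i, j) \to \Delta_\W^n(i, j)$ respecting the composition of each simplicial category and the cosimplicial action of $\Del$. The morphism $[j-i] \to [n]$, $k \mapsto i + k$, induces an isomorphism $\widetilde{\fr C}[\Delta^{j-i}](0, j-i) \cong \widetilde{\fr C}[\Delta^n](i, j)$, so cosimplicial naturality reduces the problem to determining $\tau_n^{(0, n)}$ for each $n$. The target $\Delta_\W^n(0, n) = \prod_{s=0}^{n-1} \Delta^s$ is a product of standard simplices; because any simplex of such a product is determined by its vertices, $\tau_n^{(0, n)}$ is in turn determined by its action on $0$-simplices. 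A $0$-simplex of $\widetilde{\fr C}[\Delta^n](0, n) = N(P_{0, n})^{\op}$ is a subset $S = \{0 = s_0 < s_1 < \cdots < s_p = n\} \in P_{0, n}$, which decomposes in $\widetilde{\fr C}[\Delta^n]$ as the composite $\{s_{p-1}, s_p\} \circ \cdots \circ \{s_0, s_1\}$; each edge $\{s_{k-1}, s_k\}$ is the cosimplicial image of the unique vertex of $\widetilde{\fr C}[\Delta^1](0, 1)$ under the morphism $[1] \to [n]$, $0, 1 \mapsto s_{k-1}, s_k$. Since $\tau_1$ is forced to be the identity of $\Delta^0$, all edge images are forced, and hence so is every $0$-simplex image, and hence so is every simplex---proving uniqueness.

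For existence, I would take the forced formula above as a definition and verify: (i) the resulting vertex tuples are monotone in each factor $\Delta^s$, so that they assemble into simplices of the product; (ii) the formula respects composition in the source, which is straightforward because composition in $\Delta_\W^n$ is by concatenation of tuples; and (iii) the formula is cosimplicial-natural, by direct comparison against the face and degeneracy formulas for $\Delta_\W^\bullet$.

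The hard part will be the cosimplicial-naturality check in (iii): the formulas for $\partial_i$ and $\sigma_i$ on $\Delta_\W^\bullet$ involve several cases, each of which must be matched against the corresponding action on $\widetilde{\fr C}[\Delta^\bullet]$ coming from the cosimplicial structure of $\Del$.
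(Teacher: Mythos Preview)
Your proposal is correct and follows essentially the same approach as the paper: uniqueness by reduction to vertices and then to the single edge in $\widetilde{\fr C}[\Delta^{1}]$, existence by writing down the forced poset map and checking composition and cosimplicial naturality. For your step (iii), which you flag as the hard part, the paper avoids a direct case-by-case check against the face and degeneracy formulas: it first proves an auxiliary claim that the map $\Delta_{\W}^{1}(0,1)\to\Delta_{\W}^{n}(i,j)$ induced by $\iota_{i,j}^{(n)}\colon[1]\to[n]$ sends the unique vertex to $(0,1,\dots,j-i-1)$, and then observes that for any $\alpha\colon[p]\to[q]$ both $\alpha_{\ast}\varphi_{p}(\{i,j\})$ and $\varphi_{q}\alpha_{\ast}(\{i,j\})$ equal $(\iota_{\alpha(i),\alpha(j)}^{(q)})_{\ast}(g_{1,1})$ by functoriality of the cosimplicial structure, so naturality on the generating vertices---and hence everywhere---follows in one stroke.
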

\begin{rem}
If one wants to stick to the cosimplicial simplicial category $\fr C[\Delta^{\bullet}]$,
one needs to modify the $\W$-construction by replacing the mapping simplicial
sets of $\Delta_{\W}^{\bullet}$ by their opposites to obtain a corresponding
claim for Proposition \ref{prop:CtoG}.
\end{rem}
\begin{proof}[Proof of Proposition \ref{prop:CtoG}]
We begin by showing uniqueness. If there is a morphism $\varphi\colon\widetilde{\fr C}[\Delta^{\bullet}]\to\Delta_{\W}^{\bullet}$
of cosimplicial objects as in the statement, then the map $\varphi_{1}\colon\widetilde{\fr C}[\Delta^{1}]\to\Delta_{\W}^{1}$
must be the identity map since both $\widetilde{\fr C}[\Delta^{1}]$
and $\Delta_{\W}^{1}$ are isomorphic to the poset $[1]$. Since $\varphi$
commutes with the cosimplicial structure maps, it follows that for
each $0\leq i\leq j\leq n$, the image of $\{i,j\}\in\widetilde{\fr C}[\Delta^{n}]\pr{i,j}$
is completely determined. Since $\varphi$ commutes with compositions,
we find that $\varphi_{n}\colon\widetilde{\fr C}[\Delta^{n}]\pr{i,j}\to\Delta_{\W}^{n}\pr{i,j}$
is completely determined on vertices. But now this is a map between
the nerves of posets, and such a map is determined by its values on
vertices. This proves the uniqueness part.

Before moving on to the proof of the existence part, we prove the
following auxiliary assertion: Let $0\leq i\leq j\leq n$ be integers,
and let $\iota=\iota_{i,j}^{\pr n}\colon[1]\to[n]$ denote the poset map
defined by $\iota_{i,j}^{\pr n}\pr 0=i$ and $\iota_{i,j}^{\pr n}\pr 1=j$.
Then the map $\iota_{\ast}\colon\Delta_{\W}^{1}\pr{0,1}\to\Delta_{\W}^{n}\pr{i,j}=\prod_{i<k\leq j}\Delta^{n-k}$
maps the unique vertex of $\Delta_{\W}^{1}\pr{0,1}$ to the vertex
\[
\pr{0,1,\dots,j-i-1}\in\pr{\prod_{i<k\leq j}\Delta^{n-k}}_{0}.
\]
(When $i=j$, interpret the left hand side as the unique vertex $0\in\Delta_{0}^{0}=\Delta_{\W}^{n}\pr{i,j}$.)
The claim is proved by induction on $n$. The claim is trivial if
$n=0$. For the inductive step, suppose that the claim holds for $n-1$.
There are three cases to consider:

\begin{enumerate}[label=(\arabic*)]

\item If $i=j$, there is nothing to prove.

\item If $i<j<n$, then $\iota_{i,j}^{\pr n}=\partial_{n}\iota_{i,j}^{\pr{n-1}}$.
By definition, the map 
\[
\partial_{n}\colon\Delta_{\W}^{n-1}\pr{i,j}\to\Delta_{\W}^{n}\pr{i,j}
\]
 is given by
\[
\partial_{n-j}\times\cdots\times\partial_{n-i}\colon\Delta^{n-1-j}\times\cdots\times\Delta^{n-1-i}\to\Delta^{n-j}\times\cdots\times\Delta^{n-i}.
\]
So it fixes the vertex $\pr{0,1,\dots,j-i-1}$.

\item If $i<j=n$, then $\iota_{i,j}^{\pr n}=\partial_{n-1}\iota_{i,n-1}^{\pr{n-1}}$.
By definition, the map 
\[
\partial_{n-1}\colon\Delta_{\W}^{n-1}\pr{i,n-1}\to\Delta_{\W}^{n}\pr{i,n}
\]
 is given by
\[
\pr{\id,\partial_{0}}\times\partial_{1}\times\cdots\times\partial_{n-i}\colon\Delta^{0}\times\cdots\times\Delta^{n-1-i}\to\Delta^{0}\times\Delta^{1}\times\cdots\times\Delta^{n-i}.
\]
Thus it carries the vertex $\pr{0,1,\dots,n-i-2}$ to the vertex $\pr{0,1,\dots,n-i-1}$.

\end{enumerate}

We now proceed to the proof of the existence part. Define a simplicial
functor $\varphi_{n}\colon\widetilde{\fr C}[\Delta^{n}]\to\Delta_{\o W}^{n}$
as follows: Recall that $\widetilde{\fr C}[\Delta^{n}]\pr{i,j}$ is
the nerve of the \textit{opposite} of the poset 
\[
P_{i,j}=\{I\subset[i,j]\mid\min I=i,\,\max I=j\},
\]
with ordering given by inclusion. For $0\le i<j\leq n$, we
define a map of simplicial sets
\[
\varphi_{n}\colon\widetilde{\fr C}[\Delta^{n}]\pr{i,j}=N\pr{P_{i,j}^{\mathrm{op}}}\to\Delta_{\W}^{n}\pr{i,j}=\Delta^{n-j}\times\cdots\times\Delta^{n-i-1}
\]
on vertices to be the map induced by the poset map 
\begin{align*}
P_{i,j}^{\mathrm{op}} & \to[n-j]\times\cdots\times[n-i-1]\\
\{i=i_{0}<\dots<i_{k}=j\} & \mapsto\pr{0,\dots,i_{k}-i_{k-1}-1,\dots,0,\dots,i_{1}-i_{0}-1}.
\end{align*}
Note that this map is indeed a poset map because the ordering of $P_{i,j}^{\mathrm{op}}$
is given by the reverse inclusion. This defines a simplicial functor $\varphi_{n}\colon\widetilde{\fr C}[\Delta^{n}]\to\Delta_{\W}^{n}$.
We claim that the simplicial functors $\pr{\varphi_{n}}_{n\geq0}$
define a morphism of cosimplicial objects. In other words, we claim
that for any poset map $\alpha\colon[p]\to[q]$ and $0\le i\leq j\leq p$,
the diagram 
\[\begin{tikzcd}
	{\widetilde{\mathfrak{C}}[\Delta^p](i,j)} & {\Delta^{p}_{\overline{W}}(i,j)} \\
	{\widetilde{\mathfrak{C}}[\Delta^q](\alpha(i),\alpha(j))} & {\Delta^{q}_{\overline{W}}(\alpha(i),\alpha(j))}
	\arrow["{\varphi_p}", from=1-1, to=1-2]
	\arrow["{\alpha_\ast}"', from=1-1, to=2-1]
	\arrow["{\alpha_\ast}", from=1-2, to=2-2]
	\arrow["{\varphi_q}"', from=2-1, to=2-2]
\end{tikzcd}\]commutes. Since the simplicial sets in the diagrams are nerves of
posets, it suffices to show that the diagram commutes on the level
of vertices. Also, since $\varphi_{n},\alpha_{\ast}$ commutes with
compositions in $\widetilde{\fr C}[\Delta^{n}]$ and $\Delta_{\W}^{n}$,
it suffices to establish the identity 
\[
\alpha_{\ast}\varphi_{p}\pr{\{i,j\}}=\varphi_{q}\alpha_{\ast}\pr{\{i,j\}}.
\]
This is clear, because by construction, both sides are equal to $\iota_{\alpha\pr i,\alpha\pr j}^{\pr q}\pr{g_{1,1}}$.
The proof is now complete.
\end{proof}
We wish to show that the induced natural transformation $\W\to N\colon\sf{Grpd}_{\Delta}\to\SS$
is a natural weak equivalence. For this, the following proposition
comes in handy. 
\begin{prop}
The functor $N\colon\sf{Grpd}_{\Delta}\to\SS$ is right Quillen.
\end{prop}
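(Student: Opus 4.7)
The plan is to reduce to the established fact that $N\colon \Cat_{\Delta} \to \SS_{\mathrm{Joyal}}$ is right Quillen (Cordier, Lurie), and then to upgrade the target from the Joyal to the Kan--Quillen model structure using the observation (already noted in the Remark above) that $N$ sends any simplicial groupoid to a Kan complex. For the left adjoint, note that $N\colon \sf{Grpd}_{\Delta} \to \SS$ factors as $N\iota$ where $\iota\colon \sf{Grpd}_{\Delta}\hookrightarrow\Cat_{\Delta}$ is the inclusion; since $\iota$ has a left adjoint $L\colon \Cat_{\Delta}\to \sf{Grpd}_{\Delta}$, namely the levelwise groupoid reflection (formally inverting every morphism in every degree), the composite $L\widetilde{\fr C}\colon \SS \to \sf{Grpd}_{\Delta}$ is a left adjoint to $N$.

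For preservation of (trivial) fibrations, my plan is to show that every fibration (resp.\ trivial fibration) in $\sf{Grpd}_{\Delta}$ is a Bergner fibration (resp.\ trivial fibration) when viewed in $\Cat_{\Delta}$. Granting this, the Cordier--Lurie theorem sends it to a Joyal fibration (resp.\ trivial fibration) under $N$, and two standard facts finish the argument: the Joyal and Kan--Quillen model structures on $\SS$ share their trivial fibrations (both have the monomorphisms as cofibrations), and a Joyal fibration between Kan complexes is a Kan fibration.

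The substantive bookkeeping is the promotion of the Proposition's conditions, phrased only in terms of endomorphism simplicial sets $\cal G(x,x)\to \cal H(fx,fx)$ and the lifting of $0$-morphisms, to the corresponding Bergner conditions on arbitrary hom-spaces. The key move is that whenever $\cal G(x,y)$ contains a $0$-morphism $g$, post-composition with $g$ gives an isomorphism $\cal G(x,x)\cong\cal G(x,y)$ of simplicial sets, while post-composition with $fg$ identifies $\cal H(fx,fx)$ with $\cal H(fx,fy)$ compatibly with $f$, so the local fibration and local weak-equivalence conditions transfer from endomorphisms to $\cal G(x,y)\to\cal H(fx,fy)$. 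When $\cal G(x,y)=\emptyset$ the fibration condition is vacuous, and the component bijection forces $\cal H(fx,fy)=\emptyset$ as well, which settles the weak-equivalence case. Finally, the Proposition's lifting of $0$-morphisms is visibly stronger than Bergner's lifting of equivalences in the homotopy category. I expect this cross-comparison to absorb most of the effort, together with the invocation of the standard but mildly technical fact, due to Joyal, that an isofibration between Kan complexes is a Kan fibration.
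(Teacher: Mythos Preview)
Your proposal is correct and follows the same overall strategy as the paper: factor $N$ through the inclusion $\iota\colon\sf{Grpd}_{\Delta}\hookrightarrow\Cat_{\Delta}$, use that $N\colon\Cat_{\Delta}\to\SS_{\mathrm{Joyal}}$ is right Quillen, and then upgrade Joyal fibrations to Kan fibrations via Joyal's theorem that an isofibration between Kan complexes is a Kan fibration.

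There are two differences worth noting. First, for the existence of a left adjoint the paper invokes the adjoint functor theorem (checking that $N$ preserves limits and filtered colimits and that the categories involved are locally presentable), whereas you construct the left adjoint explicitly as $L\widetilde{\fr C}$ with $L$ the levelwise groupoid reflection; your route is more elementary and avoids the locally-presentable bookkeeping. Second, for trivial fibrations the paper shows directly that $N$ preserves \emph{all} weak equivalences (every simplicial groupoid is Bergner-fibrant, and weak categorical equivalences are weak homotopy equivalences), while you instead argue that $\iota$ sends trivial fibrations to Bergner trivial fibrations and then use that the Joyal and Kan--Quillen model structures share their trivial fibrations. Both work; the paper's version gives a slightly stronger conclusion with no extra effort. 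Finally, your plan to verify that the endomorphism-only description of fibrations and weak equivalences in $\sf{Grpd}_{\Delta}$ matches the Bergner conditions (via transport along $g_*\colon\cal G(x,x)\xrightarrow{\cong}\cal G(x,y)$) is exactly the content behind the paper's unproved assertion that $\iota$ preserves fibrations, so you are in fact filling a detail the paper leaves implicit.
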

\begin{proof}
We must show that $N$ is a right adjoint and preserves fibrations
and trivial fibrations. 

Let us begin by showing that $N$ is a right adjoint. According to
the adjoint functor theorem \cite[Theorem 1.66]{AR1994}, we only need
to show that $N$ preserves limits and filtered colimits, and that
both $\sf{sSet}$ and $\sf{Grpd}_{\Delta}$ are locally presentable.
The preservation of limits of follows from the fact that the inclusion
$\sf{Grpd}_{\Delta}\hookrightarrow\Cat_{\Delta}$ preserves limits
and $N\colon\Cat_{\Delta}\to\SS$ is a right adjoint. The preservation
of filtered colimits follows from the facts that the inclusion $\sf{Grpd}_{\Delta}\hookrightarrow\Cat_{\Delta}$
preserves filtered colimits, and that the simplicial categories $\widetilde{\fr C}[\Delta^{n}]$
are compact objects of $\Cat_{\Delta}$. For the local presentability,
let us recall the following facts:
\begin{enumerate}[label=(\arabic*)]
\item Every functor category of a locally presentable category is locally
presentable \cite[Corollary 1.54]{AR1994}.
\item A full subcategory of a locally presentable category closed under
limits and filtered colimits is again locally presentable \cite[Theorem 2.48]{AR1994}.
\end{enumerate}
It follows from (1) that $\SS$ is locally presentable, and combining
this with (2) shows that $\Cat$ and $\Cat^{\Del^{\op}}$ are locally
presentable. Another application of (2) to the inclusion $\sf{Grpd}_{\Delta}\hookrightarrow\Cat^{\Del^{\op}}$
shows that $\sf{Grpd}_{\Delta}$ is locally presentable. 

Next, to see that $N$ preserves weak equivalences, note that the
inclusion $\sf{Grpd}_{\Delta}\hookrightarrow\Cat_{\Delta}$ maps weak
equivalences in $\sf{Grpd}_{\Delta}$ to weak equivalences in $\Cat_{\Delta}$
between fibrant objects, because any simplicial group is a Kan complex.
Since weak categorical equivalences are weak homotopy equivalences,
it follows that $N$ preserves weak equivalences. 

It remains to verify that $N$ preserves fibrations. Since the homotopy
coherent nerve functor is a right Quillen functor from $\Cat_{\Delta}$
to $\SS_{\mathrm{Joyal}}$, and since the inclusion $\sf{Grpd}_{\Delta}\hookrightarrow\Cat_{\Delta}$
preserves fibrations, the functor $N$ maps fibrations in $\sf{Grpd}_{\Delta}$
to fibrations in the Joyal model structure. Now recall that $N\cal G$
is a Kan complex for any simplicial groupoid $\cal G$. By Joyal's
lifting theorem \cite[Theorem 2.1.8]{Landoo-cat}, every Joyal fibration
between Kan complexes is a Kan fibration. Thus $N$ preserves fibrations,
as claimed.
\end{proof}
We now arrive at the main result.
\begin{thm}
\label{thm:main}Let $\cal G$ be a simplicial groupoid. The morphism
$\widetilde{\fr C}[\Delta^{\bullet}]\to\Delta_{\W}^{\bullet}$ of cosimplicial
objects induces a homotopy equivalence
\[
\W\cal G\xrightarrow{\simeq}N\cal G
\]
of Kan complexes.
\end{thm}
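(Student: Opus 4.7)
The strategy, following the introduction, is to compare the left adjoints rather than the right adjoints directly. Both $\overline{W}$ and $N$ are right Quillen functors $\sf{Grpd}_{\Delta} \to \SS$, the former by Goerss--Jardine and the latter by the preceding proposition. Their respective left adjoints are the Dwyer--Kan loop groupoid $G \dashv \overline{W}$ and a functor $L_N \dashv N$, which may be identified with the composition $\mathrm{grpd} \circ \widetilde{\fr C}[-]$ of the standard rigidification with the left adjoint $\mathrm{grpd}$ to the inclusion $\sf{Grpd}_{\Delta} \hookrightarrow \Cat_{\Delta}$. The given morphism $\widetilde{\fr C}[\Delta^{\bullet}] \to \Delta_{\overline{W}}^{\bullet}$ of cosimplicial objects corresponds by adjunction to a natural transformation $\alpha \colon L_N \to G$ of left Quillen functors $\SS \to \sf{Grpd}_{\Delta}$.

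Since every simplicial set is cofibrant and every simplicial groupoid is fibrant (the hom-spaces being simplicial groups, hence Kan complexes), the standard adjoint-duality principle for Quillen functors reduces the problem to showing that $\alpha_X \colon L_N X \to GX$ is a weak equivalence in $\sf{Grpd}_{\Delta}$ for every simplicial set $X$. The first substantive step is to verify this on the representables. For $X = \Delta^n$, both $\widetilde{\fr C}[\Delta^n]$ and $\Delta_{\overline{W}}^n$ have contractible mapping simplicial sets (the former being nerves of posets with minimum and maximum, the latter being products of standard simplices), so both $L_N(\Delta^n)$ and $G(\Delta^n)$ are simplicial groupoids on the object set $\{0,\ldots,n\}$ with contractible hom-spaces, each weakly equivalent to the terminal groupoid; hence $\alpha_{\Delta^n}$ is a weak equivalence.

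The main obstacle, I expect, will be extending this from representables to arbitrary $X$. My plan is to exploit that $L_N$ and $G$, being left Quillen, preserve cofibrations, pushouts, and transfinite composition, and to argue by induction on the skeletal filtration of $X$: writing $X^{(n)} = X^{(n-1)} \cup_{\sqcup \partial \Delta^n} \sqcup \Delta^n$ as a pushout along a cofibration, the gluing lemma in the Dwyer--Kan model structure on $\sf{Grpd}_{\Delta}$ propagates the weak equivalence from the pieces (where $\alpha_{\partial \Delta^n}$ is handled by an inner induction on dimension) to the pushout. Passing to the filtered colimit over skeleta completes the argument, provided filtered colimits preserve weak equivalences in $\sf{Grpd}_{\Delta}$. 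Verifying the hypotheses of the gluing lemma and this filtered-colimit compatibility will be the bulk of the technical content.
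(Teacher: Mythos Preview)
Your overall strategy coincides with the paper's: pass to left adjoints, verify the comparison on the representables $\Delta^n$, and then propagate to arbitrary $X$ by skeletal induction using that left Quillen functors send the pushout squares and the tower of cofibrations to homotopy colimits. The inductive and colimit parts of your outline are exactly what the paper does, and your worries about the gluing lemma and filtered colimits dissolve once you phrase things as ``left Quillen functors take cofibrant pushouts along cofibrations (resp.\ transfinite composites of cofibrations between cofibrant objects) to homotopy pushouts (resp.\ homotopy colimits).''

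There is, however, a genuine gap in your treatment of the base case. You write that because $\widetilde{\fr C}[\Delta^n]$ and $\Delta_{\W}^n$ have contractible hom-simplicial-sets, \emph{so do} their groupoidifications $L_N(\Delta^n)$ and $G(\Delta^n)$. That implication is not valid: applying $\mathrm{grpd}$ freely adjoins inverses, and the resulting hom-objects are computed levelwise as hom-sets in a free groupoid on a quiver, which are typically infinite. For instance, in $G(\Delta^2)$ the simplicial automorphism group of the object $0$ is, in simplicial degree $k$, a free group on $k+1$ generators, not the trivial group. These simplicial groups \emph{are} weakly contractible, but that is precisely the nontrivial content one is trying to establish; it does not follow formally from contractibility of the hom-spaces before groupoidification.

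The paper sidesteps this entirely: rather than analysing $L_N(\Delta^n)$ and $L_{\W}(\Delta^n)$ directly, it observes that $\Delta^n\to\Delta^0$ is a weak homotopy equivalence between cofibrant objects, so by Ken Brown's lemma both left Quillen functors send it to a weak equivalence in $\sf{Grpd}_{\Delta}$. This reduces the base case to $n=0$, where $L_N(\Delta^0)$ and $L_{\W}(\Delta^0)$ are visibly the terminal simplicial groupoid (as one checks by the Yoneda-style calculation $\sf{Grpd}_{\Delta}(L(\Delta^0),\cal G)\cong\opn{ob}\cal G$). With this correction your argument becomes the paper's.
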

\begin{proof}
The natural transformation $\W\to N\colon\sf{Grpd}_{\Delta}\to\SS$ induces
a natural transformation $\bb R\W\to\bb RN$ between the total right
derived functors. We must show that the latter natural transformation
is a natural isomorphism. By the uniqueness of adjoints, it suffices
to show that the induced natural transformation $\bb LL_{N}\to\bb LL_{\o W}$
is a natural isomorphism, where $L_{\W},L_{N}\colon\SS\to\sf{Grpd}_{\Delta}$
are the left adjoints of $\W$ and $N$. 

We begin by showing that $L_{N}\pr{\Delta^{n}}\to L_{\W}\pr{\Delta^{n}}$
is a weak equivalence for every $n\geq0$. Since the map $\Delta^{n}\to\Delta^{0}$
is a weak homotopy equivalence, it suffices to prove this for $n=0$.
But now we have natural bijections
\[
\sf{Grpd}_{\Delta}\pr{L_{N}\pr{\Delta^{0}},\cal G}\cong\SS\pr{\Delta^{0},N\cal G}\cong\opn{ob}\cal G
\]
and
\[
\sf{Grpd}_{\Delta}\pr{L_{\W}\pr{\Delta^{0}},\cal G}\cong\SS_{0}\pr{\Delta^{0},\W\cal G}\cong\opn{ob}\cal G,
\]
which shows that $L_{N}\pr{\Delta^{0}}$ and $L_{\o W}\pr{\Delta^{0}}$
are the terminal simplicial groupoids. Thus the claim holds trivially.

Now let $X$ be an arbitrary simplicial set. We show that $L_{N}\pr X\to L_{\W}\pr X$
is a weak equivalence of simplicial groups. Since $X$ is a colimit
of the sequence of cofibrations between cofibrant objects
\[
\sk_{0}X\to\sk_{1}X\to\cdots,
\]
it suffices to consider the case where $X$ is isomorphic to its $n$-skeleton.
We prove the claim by induction on $n$. The base case $n=0$ follows
from the result in the previous paragraph. For the inductive step,
assume the claim holds for $n$. We have a pushout diagram of the
form 
\[\begin{tikzcd}
	{\coprod_{\alpha }\partial \Delta^{n+1}} & {\operatorname{sk}_nX} \\
	{\coprod_{\alpha }\Delta^{n+1}} & X,
	\arrow[from=1-1, to=2-1]
	\arrow[from=1-1, to=1-2]
	\arrow[from=1-2, to=2-2]
	\arrow[from=2-1, to=2-2]
\end{tikzcd}\]and by the induction hypothesis, the claim holds for all the corners
except for $X$. Now observe that the image of the above square under
any left Quillen functor is a homotopy pushout, because all the relevant
objects are cofibrant and the left vertical arrow is a cofibration.
Hence $L_{N}X\to L_{\W}X$ is a weak equivalence, as required.
\end{proof}
As a corollary, we find that the homotopy coherent nerve models the
classifying space:
\begin{cor}
\label{cor:main}For any simplicial group $G$, the map
\[
\W G\to NG
\]
is a homotopy equivalence of Kan complexes.
\end{cor}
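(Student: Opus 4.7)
The plan is to deduce this directly from Theorem \ref{thm:main} by invoking the identification, fixed in the Notation section, of a simplicial group $G$ with the simplicial groupoid having one object and endomorphism simplicial set $G$. Under this identification, the groupoid version of the theorem specializes to the desired statement; the only thing that needs a sentence of comment is that the two constructions $\W$ and $N$ on the left and right of the comparison map reduce to the familiar ones on simplicial groups.

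First I would observe that $\W\cal G$ as defined in the paper, when applied to a one-object simplicial groupoid $\cal G$ with endomorphism simplicial set $G$, recovers the classical $\W G$ appearing in Section~2: the $n$-simplices are sequences $(g_0,\dots,g_{n-1})$ with $g_{n-i}$ an $(n-i)$-simplex of $G$, and the face and degeneracy formulas match those of the classical $\W$-construction on simplicial groups. Next I would note that $N\cal G$ is by definition the homotopy coherent nerve of $\cal G$ regarded as a (one-object) simplicial category, so $N\cal G = NG$ with the reading of $G$ as a simplicial category with one object, matching the simplicial set whose homotopy type the corollary is about.

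With these identifications in place, the corollary is obtained by applying Theorem~\ref{thm:main} to the one-object simplicial groupoid $G$: the induced map $\W G \to NG$ is a homotopy equivalence of Kan complexes. No further argument is needed, and there is no real obstacle here — the work has already been done in proving the theorem for arbitrary simplicial groupoids. If one wishes, one can append the remark that, by the corollary at the end of Section~2, $\W G$ is a model for $BG$, so this also gives that $NG$ has the homotopy type of $BG$.
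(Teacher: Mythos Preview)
Your proposal is correct and matches the paper's approach exactly: the paper states the corollary with no separate proof, treating it as an immediate specialization of Theorem~\ref{thm:main} to the one-object simplicial groupoid associated to $G$. Your additional remarks about why the $\W$- and $N$-constructions on groupoids restrict to the expected ones on simplicial groups are accurate and more explicit than what the paper provides.
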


\providecommand{\bysame}{\leavevmode\hbox to3em{\hrulefill}\thinspace}
\providecommand{\MR}{\relax\ifhmode\unskip\space\fi MR }
\providecommand{\MRhref}[2]{%
  \href{http://www.ams.org/mathscinet-getitem?mr=#1}{#2}
}
\providecommand{\href}[2]{#2}

\end{document}